\newcommand{\R}{\mathbb{R}}
\newcommand{\T}{\mathbb{T}}
\newtheorem{lemma}{Lemma}[section]
\newtheorem{theorem}[lemma]{Theorem}
\newtheorem{prop}[lemma]{Proposition}
\newtheorem{df}[lemma]{Definition}
\newtheorem{remark}[lemma]{Remark}
\begin{document}

\title{Noether's symmetry theorem for nabla problems\\
of the calculus of variations\thanks{Submitted 20/Oct/2009; 
Revised 27/Jan/2010; Accepted 28/July/2010; 
for publication in \emph{Appl. Math. Lett.}}}

\author{Nat\'{a}lia Martins\\
\texttt{natalia@ua.pt}
\and
Delfim F. M. Torres\\
\texttt{delfim@ua.pt}}

\date{Department of Mathematics\\
      University of Aveiro\\
      3810-193 Aveiro, Portugal}

\maketitle


\begin{abstract}
We prove a Noether-type symmetry theorem and
a DuBois-Reymond necessary optimality condition
for nabla problems of the calculus of variations
on time scales.
\end{abstract}

\smallskip

\noindent \textbf{Mathematics Subject Classification 2010:} 49K05; 34N05.

\smallskip


\smallskip

\noindent \textbf{Keywords:} Noether's symmetry theorem;
DuBois-Reymond condition; calculus of variations;
delta and nabla calculus; duality; time scales.


\section{Introduction}

The theory of time scales was born with the 1989 PhD thesis
of Stefan Hilger, done under supervision of Bernd Aulbach
\cite{Aulbach:Hilger}. The aim was to unify various
concepts from the theories of discrete and continuous dynamical
systems, and to extend such theories to more general classes of dynamical systems.
The calculus of time scales is nowadays a powerful tool,
with two excellent books dedicated to it \cite{Bh,Bh:03}.
For a good introductory survey on time scales
we refer the reader to \cite{survey:ts}.

The calculus of variations is well-studied in the continuous, discrete, and quantum settings
(see, \textrm{e.g.}, \cite{Bang:q-calc,GelfandFomin,KP}).
Recently an important and very active line of research has been unifying and generalizing the known calculus
of variations on $\mathbb{R}$, $\mathbb{Z}$, and $q^{\mathbb{N}_0}:=\{q^k | k \in \mathbb{N}_0\}$, $q>1$,
to an arbitrary time scale $\mathbb{T}$ via delta calculus. Progress toward this has been made on the topics of
necessary and sufficient optimality conditions and its applications -- see
\cite{NT:ts,bohner:CV,Rui:Del,Mal:Tor:09,Mal:Tor:Wei} and references therein.
The goal is not to simply reprove existing and well-known theories,
but rather to view $\mathbb{R}$, $\mathbb{Z}$, and $q^{\mathbb{N}_0}$
as special cases of a single and more general theory.
Doing so reveals richer mathematical structures (\textrm{cf.} \cite{bohner:CV})
which has great potential for new applications, in particular
in engineering \cite{SSW} and economics \cite{Atici:McMahan,Atici:Uysal:08}.

The theory of time scales is, however, not unique.
Essentially, two approaches are followed in the literature:
one dealing with the delta calculus (the forward approach) \cite{Bh};
the other dealing with the nabla calculus (the backward approach) \cite[Chap.~3]{Bh:03}.
To actually solve problems of the calculus of variations and optimal control
it is often more convenient to work backwards in time,
and recently a general theory of the calculus of variations
on time scales was introduced via the nabla
operator. Results include: Euler-Lagrange necessary optimality
conditions \cite{Atici}, necessary conditions for higher-order nabla problems
\cite{NataliaHigherOrderNabla}, and optimality conditions for variational problems subject to
isoperimetric constraints \cite{comRicardoISO:nabla}.
In this note we develop further the theory by
proving two of the most beautiful results
of the calculus of variations --- the Noether
symmetry theorem  and the DuBois-Reymond condition
\cite{Torres04} --- to nabla variational
problems on an arbitrary time scale $\T$.
Our main tool is the recent duality technique
of M.~C.~Caputo \cite{Caputo}, which allows
to obtain nabla results on time scales
from the delta theory. Caputo's duality concept is briefly
presented in Sec.~\ref{sec:prel}; in Sec.~\ref{sec:mr}
our results are formulated and proved; in Sec.~\ref{sec:ex}
an illustrative example is given. We end with
some words about the originality of our results
and the state of the art (Sec.~\ref{sec:fc}).


\section{Preliminaries}
\label{sec:prel}

We assume the reader to be familiar with the calculus on time scales \cite{Bh,Bh:03}.
Here we just review the main tool used in the paper: duality.

Let $\T$ be an arbitrary time scale and let $\T^\ast:=\{s\in\R:-s\in\T\}$.
The new time scale $\T^\ast$ is called the \emph{dual time scale} of $\T$.
If $\sigma$ and $\rho$ denote, respectively, the forward and backward
jump operators on $\T$, then we denote by $\widehat{\sigma}$ and $\widehat{\rho}$
the forward and backward jump operators of $\T^\ast$. Similarly,
if $\mu$ and $\nu$ denote, respectively, the forward and backward graininess function on $\T$,
then $\widehat{\mu}$ and $\widehat{\nu}$ denote, respectively,
the forward and backward graininess function on $\T^\ast$; if $\Delta$ (resp. $\nabla$)
denote the delta (resp. nabla) derivative on $\T$, then $\widehat{\Delta}$ (resp. $\widehat{\nabla}$)
will denote the delta (resp. nabla) derivative on $\T^\ast$.

\begin{df}
Given a function $f:\T\rightarrow \R$ defined on time scale $\T$
we define the dual function $f^{\ast}:\T^{\ast}\rightarrow\R$
by $f^{\ast}(s):=f(-s)$ for all $s\in\T^{\ast}$.
\end{df}

We recall some basic results concerning
the relationship between \emph{dual objects}.
The set of all rd-continuous (resp. ld-continuous)
functions is denoted by $C_{rd}$ (resp. $C_{ld}$).
Similarly, $C^1_{rd}$ (resp. $C^1_{ld}$) will denote the set
of functions from $C_{rd}$ (resp. $C_{ld}$)
whose delta (resp. nabla) derivative belongs to $C_{rd}$ (resp. $C_{ld}$).

\begin{prop}[\cite{Caputo}]
\label{integral}
Let $\T$ be a given a time scale with $a,b\in\T$, $a<b$,
and $f:\T\rightarrow \R$. Then,
\begin{enumerate}
\item $(\T^{\kappa})^{\ast}=(\T^{\ast})_{\kappa}$ and $(\T_{\kappa})^{\ast}=(\T^{\ast})^{\kappa}$;

\item $([a,b])^{\ast} =[-b,-a]$ and $([a,b]^{\kappa})^{\ast} = [-b,-a]_{{k}} \subseteq \T^{\ast}$;

\item for all $s\in\T^{\ast}$,
$\widehat{\sigma}(s)=-\rho(-s)=-\rho^{\ast}(s)$ and
$\widehat{\rho}(s)=-\sigma(-s)=-\sigma^{\ast}(s)$;

\item for all $s\in\T^{\ast}$, $\widehat\nu (s)=\mu^{\ast}(s)$ and $\widehat\mu (s)=\nu^{\ast}(s)$;

\item $f$ is rd (resp. ld) continuous
if and only if its dual $f^{\ast}:\T^{\ast}\rightarrow \R$
is ld (resp. rd) continuous;

\item if $f$ is \emph{delta} (resp. \emph{nabla})
differentiable at $t_0\in\T^{\kappa}$ (resp. at $t_0\in\T_{\kappa}$),
then $f^{\ast}:\T^{\ast}\rightarrow \R$ is \emph{nabla} (resp. \emph{delta})
differentiable at $-t_0\in(\T^{\ast})_{\kappa}$ (resp. $-t_0\in(\T^{\ast})^{\kappa}$), and
\begin{equation*}
\begin{split}
f^{\Delta}(t_0) &= -(f^{\ast})^{\widehat\nabla}(-t_0) \quad \text{(resp. $f^{\nabla}(t_0)=-(f^{\ast})^{\widehat\Delta}(-t_0)$)} \, ,\\
f^{\Delta}(t_0) &= -((f^{\ast})^{\widehat\nabla})^{\ast}(t_0) \quad \text{(resp. $f^{\nabla}(t_0)=-((f^{\ast})^{\widehat\Delta})^{\ast}(t_0)$)}\, ,\\
(f^{\Delta})^{\ast}(-t_0) &= -((f^{\ast})^{\widehat\nabla})(-t_0) \quad \text{(resp. $(f^{\nabla})^{\ast}(-t_0)=-(f^{\ast})^{\widehat\Delta}(-t_0)$)}\, ;
\end{split}
\end{equation*}

\item $f$ belongs to  $C^1_{rd}$ (resp. $C^1_{ld}$)
if and only if its dual $f^{\ast}:\T^{\ast}\rightarrow \R$ belongs to $C^1_{ld}$
(resp. $C^1_{rd}$);

\item if $f:[a,b]\rightarrow \R$ is rd continuous, then
$$\int_a^b f(t)\Delta t=\int_{-b}^{-a}f^{\ast}(s)\widehat\nabla s\, ;$$

\item if $f:[a,b]\rightarrow \R$ is  ld continuous, then
$$\int_a^b f(t)\nabla t=\int_{-b}^{-a}f^{\ast}(s)\widehat\Delta s\, .$$
\end{enumerate}
\end{prop}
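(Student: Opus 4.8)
The plan is to treat the nine items in dependency order rather than sequentially: items 1--5 are order-theoretic and topological consequences of the reflection $t\mapsto -t$, item 6 is the analytic core, and items 7--9 are corollaries of 5 and 6. The single fact underlying everything is that $\phi:\T\to\T^{\ast}$, $\phi(t)=-t$, is an order-reversing bijection, so that suprema and infima interchange. Items 1 and 2 then follow by unwinding the definitions of $\T^{\kappa}$, $\T_{\kappa}$ and of time-scale intervals: a left-scattered maximum of $\T$ is precisely a right-scattered minimum of $\T^{\ast}$, so deleting it on one side corresponds to deleting it on the other, and $\phi([a,b])=[-b,-a]$ since $a\le t\le b\iff -b\le -t\le -a$. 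For item 3 I would compute $\widehat{\sigma}(s)=\inf\{r\in\T^{\ast}:r>s\}$, substitute $r=-u$ with $u\in\T$, and use order reversal to get $\widehat{\sigma}(s)=-\sup\{u\in\T:u<-s\}=-\rho(-s)$; the identity $-\rho(-s)=-\rho^{\ast}(s)$ is just the definition of the dual function, and the statement for $\widehat{\rho}$ is symmetric. Item 4 is then immediate: $\widehat{\nu}(s)=s-\widehat{\rho}(s)=s+\sigma(-s)=\mu^{\ast}(s)$, and similarly for $\widehat{\mu}$.

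The analytic heart is item 6, which I would argue from the $\varepsilon$-definition of the delta derivative: $f^{\Delta}(t_0)$ is the number such that for each $\varepsilon>0$ there is a neighbourhood $U$ of $t_0$ in $\T$ with
\[
\bigl| f(\sigma(t_0)) - f(s) - f^{\Delta}(t_0)\,(\sigma(t_0)-s)\bigr| \le \varepsilon\,|\sigma(t_0)-s| \qquad (s\in U).
\]
Writing $s=-r$ with $r$ near $-t_0$, and using $f(s)=f^{\ast}(r)$, $f(\sigma(t_0))=f^{\ast}(-\sigma(t_0))=f^{\ast}(\widehat{\rho}(-t_0))$ (by item 3), together with $\sigma(t_0)-s=-(\widehat{\rho}(-t_0)-r)$, the displayed inequality becomes exactly the defining inequality for the nabla derivative of $f^{\ast}$ at $-t_0$ with value $-f^{\Delta}(t_0)$. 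Since $\phi$ is a homeomorphism, $V=-U$ is a neighbourhood of $-t_0$, and item 1 guarantees that $-t_0$ actually lies in $(\T^{\ast})_{\kappa}$, so the nabla derivative is defined there. This yields the first formula $f^{\Delta}(t_0)=-(f^{\ast})^{\widehat{\nabla}}(-t_0)$; the remaining two formulas in item 6 are rewritings of this one using the definition of the dual function evaluated at $t_0$ versus $-t_0$, and the parenthesized nabla-to-delta case is proved symmetrically.

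Items 5 and 7 then follow from the jump-operator correspondence of item 3 together with item 6: a point is right-dense in $\T$ iff its image is left-dense in $\T^{\ast}$, so continuity and the existence of the relevant one-sided limits transfer, and $C^1$-membership is the conjunction of a continuity statement (item 5) and a derivative-continuity statement (via $(f^{\Delta})^{\ast}=-(f^{\ast})^{\widehat{\nabla}}$ from item 6, a sign that preserves continuity). For the integral identities 8 and 9 I would pass through antiderivatives: given rd-continuous $f$, time-scale theory supplies a delta-antiderivative $F$ with $F^{\Delta}=f$, and item 6 applied to $F$ gives $(F^{\ast})^{\widehat{\nabla}}(s)=-f^{\ast}(s)$; the fundamental theorem of calculus on $\T^{\ast}$ together with item 2 for the endpoints then yields
\[
\int_{-b}^{-a} f^{\ast}(s)\,\widehat{\nabla} s = -\bigl(F^{\ast}(-a)-F^{\ast}(-b)\bigr) = F(b)-F(a) = \int_a^b f(t)\,\Delta t,
\]
with item 9 being the mirror computation. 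The step I expect to be most delicate is item 6: keeping the neighbourhoods, the swap $\sigma\leftrightarrow\widehat{\rho}$, and the overall sign mutually consistent, and confirming that $-t_0$ genuinely belongs to $(\T^{\ast})_{\kappa}$ rather than merely to $\T^{\ast}$ --- which is exactly what item 1 secures.
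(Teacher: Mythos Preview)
Your argument is sound; each of the nine items is handled correctly, and the dependency order you chose (order-theoretic facts first, then the $\varepsilon$-definition computation for item~6, then continuity and integral statements as corollaries) is exactly the natural one. The sign bookkeeping in item~6 and the antiderivative argument for items~8--9 check out.

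There is, however, nothing to compare against: the paper does not prove this proposition. It is stated in the preliminaries with a citation to Caputo~\cite{Caputo} and used as a black box; the paper's contribution lies in Theorems~\ref{Noether nabla} and~\ref{secondEL nabla}, which \emph{apply} this duality dictionary. So your write-up is not an alternative to the paper's proof but a self-contained verification of a quoted result. If anything, your sketch is more explicit than what one typically finds in the source, where several of these identities are also left as routine consequences of the reflection $t\mapsto -t$.
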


\begin{df}
\label{def:dualL}
Given a Lagrangian  $L:\T\times \R^n\times\R^n\rightarrow \R$,
we define the corresponding dual Lagrangian
$L^{\ast}:\T^{\ast}\times \R^n \times \R^n \rightarrow \R$ by
$L^{\ast}(s,x,v)=L(-s,x,-v)$ for all $(s,x,v)\in\T^{\ast}\times\R^n\times\R^n$.
\end{df}

As a consequence of Definition~\ref{def:dualL}
and Proposition~\ref{integral} we have the following useful lemma:

\begin{lemma}
\label{corlag}
Given a continuous Lagrangian $L:\R \times \R^n\times\R^n\rightarrow \R$ one has
$$\int_a^b L\left(t, y^{\sigma}(t), y^{\Delta}(t)\right)\Delta t
= \int_{-b}^{-a} L^{\ast}\left(s, (y^{\ast})^{\widehat \rho}(s), (y^{\ast})^{\widehat\nabla}(s)\right)\widehat\nabla s$$
for all functions $y\in C^1_{rd}\left([a,b], \mathbb{R}^n\right)$.
\end{lemma}

\begin{df}
Let $\mathbb{T}$ be a given time
scale with at least three points, $n \in \mathbb{N}$,
and $L: \mathbb{R}\times \mathbb{R}^n \times \mathbb{R}^n
\rightarrow \mathbb{R}$ be of class $C^1$.
Suppose that $a,b\in \mathbb{T}$ and $a<b$.
We say that $q_0\in C_{rd}^{1}$
is a local minimizer for problem
\begin{equation}
\label{problem}
\begin{gathered}
\mathcal{I}[q]=\int_a^b L(t,q^\sigma(t),q^\Delta(t)) \Delta t
\longrightarrow \min\\
q(a)=q_a\, , \quad q(b)=q_b \, ,
\end{gathered}
\end{equation}
if there exists $\delta > 0$ such that
$$
\mathcal{I}[q_0]\leq \mathcal{I}[q]
$$
for all $q \in C_{rd}^{1}([a,b], \mathbb{R}^n)$
satisfying the boundary conditions
$q(a)=q_a$, $q(b)=q_b$ and
$$
\parallel q - q_0\parallel :=
\sup_{t \in [a,b]^\kappa}\mid q^{\sigma}(t)-q_0^{\sigma}(t)\mid
+ \sup_{t \in [a,b]^\kappa}\mid q^{\Delta}(t)-q_0^{\Delta}(t)\mid < \delta \, ,
$$
where $|\cdot|$ denotes a norm in $\mathbb{R}^n$.
\end{df}

The following result, known as DuBois-Reymond equation
or second Euler-Lagrange equation, is a necessary optimality
condition for optimal trajectories of delta variational problems.

\begin{theorem}[DuBois-Reymond equation for delta problems \cite{NatyZbig}]
\label{secondEL}
If $q\in \mathrm{C}^1_{rd}$ is a local minimizer of problem \eqref{problem},
then $q$ satisfies the equation
\begin{equation}
\label{2equationEL}
\frac{\Delta}{\Delta t} \mathcal{H}(t,q^\sigma(t),q^\Delta(t))
=-\partial_1 L(t,q^\sigma(t),q^\Delta(t))
\end{equation}
for all $t\in [a,b]^\kappa$, where
$\mathcal{H}(t,u,v)=-L(t,u,v) + \partial_3 L(t,u,v)v
+ \partial_1 L(t,u,v) \mu(t)$.
\end{theorem}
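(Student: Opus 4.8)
The plan is to derive this second Euler--Lagrange (DuBois--Reymond) equation from a first-variation argument, but using an \emph{inner} variation (a variation of the independent variable) rather than the ordinary vertical variation $q \mapsto q + \epsilon\eta$ that produces the first Euler--Lagrange equation. The motivation is that the claimed identity is the time-scale analogue of the conservation-of-energy relation $\frac{d}{dt}\bigl(L - \dot q\,\partial_3 L\bigr) = \partial_1 L$, which classically comes precisely from the invariance tested by reparametrizations of time; this route also has the advantage of not forcing us to assume that $q^{\Delta\Delta}$ exists, which matters since the hypothesis is only $q\in C^1_{rd}$.

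First I would fix the minimizer $q$ and, for an arbitrary $\eta \in C^1_{rd}([a,b],\R)$ with $\eta(a)=\eta(b)=0$, construct a one-parameter family $q_\epsilon$ of admissible curves whose infinitesimal generator reproduces the effect of the time-shift $\delta t = \eta$; to first order this means $\delta q = \eta\, q^\Delta$, together with the corresponding change in the line element $\Delta t$. Then I would set $\phi(\epsilon) := \mathcal{I}[q_\epsilon]$, use optimality to get $\phi'(0)=0$, and expand this as a delta integral of the schematic form $\int_a^b\bigl[\,\partial_1 L\cdot \eta + \partial_2 L\cdot \eta\, q^\Delta + \partial_3 L\cdot (\eta\, q^\Delta)^\Delta + (\text{terms from } \delta(\Delta t))\,\bigr]\,\Delta t = 0$. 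Delta integration by parts (moving the derivative off $(\eta\, q^\Delta)^\Delta$) together with the shift relation $f^\sigma = f + \mu f^\Delta$ should then reorganize the integrand into $\int_a^b\bigl[\tfrac{\Delta}{\Delta t}\mathcal{H}(t,q^\sigma,q^\Delta) + \partial_1 L(t,q^\sigma,q^\Delta)\bigr]\eta(t)\,\Delta t = 0$, where the grouping $-L + q^\Delta\partial_3 L + \mu\,\partial_1 L$ is exactly the combination of surviving terms. Finally, a DuBois--Reymond-type fundamental lemma on time scales (if the integral vanishes for every such $\eta$, then the bracket vanishes pointwise on $[a,b]^\kappa$) would give the stated equation.

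The hard part will be two time-scale-specific points. First, the inner variation itself: on a general $\T$ one cannot literally reparametrize by $t \mapsto t + \epsilon\eta(t)$, since the image need not lie in $\T$ and $q$ is only defined on $\T$, so the admissible family $q_\epsilon$ must be built intrinsically while staying in $C^1_{rd}([a,b],\R^n)$ and having the prescribed first-order behaviour. Second, and relatedly, the bookkeeping of the graininess: the extra term $\mu(t)\,\partial_1 L$ in $\mathcal{H}$ has no continuous counterpart and arises only from the discrete shifts produced by $f^\sigma = f + \mu f^\Delta$ during the integration by parts, so showing that these corrections assemble into \emph{precisely} $\mu\,\partial_1 L$ (and nothing else) is the delicate computation.

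As an independent consistency check I would also differentiate $\mathcal{H}(t,q^\sigma,q^\Delta)$ directly, using the time-scale product rule and the chain rule for $t\mapsto L(t,q^\sigma,q^\Delta)$, and substitute the first Euler--Lagrange equation $\frac{\Delta}{\Delta t}\partial_3 L = \partial_2 L$ to cancel terms. This is shorter, but it requires the additional regularity $q\in C^2_{rd}$ and presupposes the first Euler--Lagrange equation, so I would treat it only as a verification of the formula rather than as a proof under the stated hypotheses.
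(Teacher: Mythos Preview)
The paper does not contain a proof of this statement. Theorem~\ref{secondEL} sits in the Preliminaries section and is attributed to \cite{NatyZbig}; it is quoted as a known result and then invoked, via Caputo's duality, as the input for the proof of the nabla analogue (Theorem~\ref{secondEL nabla}). There is therefore nothing in the present paper for your proposal to be compared against.

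That said, your sketch is a plausible outline of how such results are obtained, and you have correctly flagged the genuine time-scale obstacle: the literal reparametrization $t\mapsto t+\epsilon\eta(t)$ need not stay in $\mathbb{T}$, so an inner-variation family cannot be built by composition with a time change in the usual way. If you want to see how this difficulty is actually handled, the argument lives in the cited source \cite{NatyZbig}, not in this paper.
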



\section{Main Results}
\label{sec:mr}

Our focus is Emmy Noether's theorem,
a fundamental tool of modern theoretical physics
and the calculus of variations,
which allows to derive conserved quantities
from the existence of variational symmetries
(see, \textrm{e.g.}, \cite{Gasta,Torres02,Torres04}).
We prove here a Noether's theorem for variational problems
with nabla derivatives and integrals (Theorem~\ref{Noether nabla}).

Let $\mathbb{T}$ be a given time
scale with at least three points, $n \in \mathbb{N}$,
and $L: \mathbb{R}\times \mathbb{R}^n \times \mathbb{R}^n
\rightarrow \mathbb{R}$ be of class $C^1$.
Suppose that $a,b\in \mathbb{T}$ and $a<b$.
We consider the following nabla variational problem on $\mathbb{T}$:
\begin{equation}
\label{problem nabla}
\mathcal{I}[q]=\int_a^b L(t,q^\rho(t),q^\nabla(t)) \nabla t
\longrightarrow \min_{q\in \mathcal{Q}},
\end{equation}
where
\[
\mathcal{Q}=\{ q\ | \ q: [a,b]
\rightarrow \mathbb{R}^n,\ q\in \mathrm{C}^1_{ld},\
q(a)=A,\ q(b)=B\}
\]
for some $A, B \in \mathbb{R}^n$, and
where $\rho$ is the backward jump operator and $q^\nabla$
is the nabla-derivative of $q$ with respect to $\mathbb{T}$.
Let $V=\{q \  | \  q:[a,b] \rightarrow \mathbb{R}^n$,
$q \in C^1_{ld}\}$, and consider a one-parameter
family of infinitesimal transformations
\begin{equation}
\label{eq:tinf nabla}
\begin{cases}
\bar{t} = T(t,q, \epsilon) = t + \epsilon\tau(t,q) + o(\epsilon) \, ,\\
\bar{q} = Q(t,q, \epsilon) = q + \epsilon\xi(t,q) + o(\epsilon) \, ,\\
\end{cases}
\end{equation}
where $\epsilon$ is a small real parameter,
and $\tau:[a,b] \times \mathbb{R}^n\rightarrow\mathbb{R}$
and $\xi:[a,b] \times \mathbb{R}^n\rightarrow\mathbb{R}^n$
are nabla differentiable functions.
We assume that for every $q$ and every $\epsilon$ the map
$[a,b]\ni t \mapsto \alpha(t):= T(t,q(t), \epsilon)\in\mathbb{R}$ is a
strictly increasing $\mathrm{C}^1_{ld}$ function and its image is
again a time scale with backward shift operator $\overline{\rho}$ and nabla
derivative $\overline{\nabla}$.

\begin{df}
\label{def. invariance nabla}
Functional $\mathcal{I}$ in \eqref{problem nabla} is said
to be invariant on $V$ under the family of
transformations \eqref{eq:tinf nabla} if
$$
\frac{d}{d\epsilon}\left\{L\left(T(t,q(t), \epsilon),
Q^{\rho}(t,q(t), \epsilon), \frac{Q^{\nabla}(t,q(t),\epsilon)}{T^{\nabla}(t,q(t), \epsilon)}\right)
T^{\nabla}(t,q(t), \epsilon)\right\}\Big|_{\epsilon=0} = 0 \, .
$$
\end{df}

\begin{remark}
\label{thm:invariance nabla}
Functional $\mathcal{I}$ in \eqref{problem nabla}
is invariant on $V$ under the family
of transformations \eqref{eq:tinf nabla} if and only if
\begin{multline*}
\partial_{1}L(t,q^{\rho}(t), q^{\nabla}(t))\tau(t,q(t))
+ \partial_{2}L(t,q^{\rho}(t), q^{\nabla}(t))\xi^{\rho}(t,q(t))\\
+\partial_{3}L(t,q^{\rho}(t), q^{\nabla}(t))\xi^{\nabla}(t,q(t))
+L(t,q^{\rho}(t), q^{\nabla}(t))\tau^{\nabla}(t,q(t))\\
-q^{\nabla}(t)\partial_{3}L(t,q^{\rho}(t), q^{\nabla}(t))\tau^{\nabla}(t,q(t))=0
\end{multline*}
for all $t \in [a,b]_\kappa$ and all $q \in V$,
where $\partial_{i} L$ denotes the partial
derivative of $L(\cdot,\cdot,\cdot)$
with respect to its $i$-th argument, $i = 1, 2, 3$, and
$$
\xi^{\rho}(t,q(t))=\xi(\rho(t), q(\rho(t))) \, ,
\quad
\xi^{\nabla}(t,q(t))=\frac{\nabla}{\nabla t}\xi(t,q(t))\, .
$$
\end{remark}

\begin{df}
We say that function $q\in \mathrm{C}^1_{ld}$ is an extremal
of problem \eqref{problem nabla} if it satisfies the
nabla Euler-Lagrange equation
\begin{equation}
\label{eq:nabla:EL:eq}
\partial_3 L(t,q^{\rho}(t), q^{\nabla}(t))
-\int_{a}^{t} \partial_2 L(\tau,q^{\rho}(\tau), q^{\nabla}(\tau)) \nabla\tau
= \text{const} \quad \forall t \in [a,b]_\kappa \, .
\end{equation}
\end{df}

\begin{theorem}[Noether's theorem for nabla variational problems]
\label{Noether nabla}
If functional $\mathcal{I}$ in \eqref{problem nabla} is invariant on
$V$ in the sense of Definition~\ref{def. invariance nabla}, then
\begin{equation*}
\partial_{3}L(t,q^{\rho},q^{\nabla})\cdot\xi(t,q)
+ \Bigl[L(t,q^{\rho},q^{\nabla})
-\partial_{3}L(t,q^{\rho},q^{\nabla})\cdot q^{\nabla}
+ \partial_{1}L(t,q^{\rho},q^{\nabla})\cdot\nu(t)\Bigr]\cdot\tau(t,q)
\end{equation*}
is constant along all the extremals of problem {\rm (\ref{problem nabla})}.
\end{theorem}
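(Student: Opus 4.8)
The plan is to exploit Caputo's duality, the main tool of the paper, to reduce the nabla problem \eqref{problem nabla} on $\T$ to an equivalent \emph{delta} problem on the dual time scale $\T^{\ast}$, to invoke there the classical (delta) Noether theorem \cite{Torres04}, and finally to translate the resulting conservation law back to $\T$. First I would rewrite the functional: combining item~(9) of Proposition~\ref{integral} with items~(3) and~(6), one has, for $s=-t$, the identities $q^{\rho}(t)=(q^{\ast})^{\widehat\sigma}(-t)$ and $q^{\nabla}(t)=-(q^{\ast})^{\widehat\Delta}(-t)$, so that by Definition~\ref{def:dualL}
\[
\int_a^b L\!\left(t,q^{\rho}(t),q^{\nabla}(t)\right)\nabla t
=\int_{-b}^{-a} L^{\ast}\!\left(s,(q^{\ast})^{\widehat\sigma}(s),(q^{\ast})^{\widehat\Delta}(s)\right)\widehat\Delta s \, .
\]
Thus minimizing $\mathcal{I}[q]$ over $\mathcal{Q}$ is literally the same problem as minimizing the delta functional $\mathcal{I}^{\ast}[q^{\ast}]$, with Lagrangian $L^{\ast}$ and admissible curves $q^{\ast}\in C^1_{rd}([-b,-a],\R^n)$, on $\T^{\ast}$; by item~(7) of Proposition~\ref{integral} this correspondence is a bijection, and $q$ is a nabla extremal precisely when $q^{\ast}$ is a delta extremal.

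Next I would dualize the symmetry. Writing $s=-t$, the transformation \eqref{eq:tinf nabla} induces on $\T^{\ast}$ the one-parameter family with generators $\widehat\tau(s,x):=-\tau(-s,x)$ and $\widehat\xi(s,x):=\xi(-s,x)$, the sign on $\widehat\tau$ arising from $\bar s=-\bar t$. The crucial verification is that invariance of $\mathcal{I}$ in the sense of Definition~\ref{def. invariance nabla} is \emph{equivalent} to delta-invariance of $\mathcal{I}^{\ast}$ under $(\widehat\tau,\widehat\xi)$: this follows by differentiating the dualized integrand exactly as in Lemma~\ref{corlag}, the chain-rule factors $T^{\nabla}$ and $\overline{\nabla}$ dualizing to their delta counterparts. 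Granting this, the delta Noether theorem applied to $(L^{\ast},q^{\ast})$ on $\T^{\ast}$ gives that
\[
\partial_{3}L^{\ast}\cdot\widehat\xi
+\Bigl[L^{\ast}-\partial_{3}L^{\ast}\cdot(q^{\ast})^{\widehat\Delta}-\partial_{1}L^{\ast}\cdot\widehat\mu\Bigr]\widehat\tau
\]
is constant in $s$ along every delta extremal, all partials of $L^{\ast}$ being evaluated at $\bigl(s,(q^{\ast})^{\widehat\sigma},(q^{\ast})^{\widehat\Delta}\bigr)$; the bracketed term is just $-\mathcal{H}^{\ast}$, with $\mathcal{H}^{\ast}$ the delta Hamiltonian of Theorem~\ref{secondEL}.

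Finally I would pull this law back to $\T$. Differentiating $L^{\ast}(s,x,v)=L(-s,x,-v)$ yields $\partial_{1}L^{\ast}=-\partial_{1}L$ and $\partial_{3}L^{\ast}=-\partial_{3}L$ at the dual point, while item~(4) of Proposition~\ref{integral} gives $\widehat\mu(s)=\nu(-s)$. Together with $\widehat\tau(-t,\cdot)=-\tau(t,\cdot)$, $\widehat\xi(-t,\cdot)=\xi(t,\cdot)$, $(q^{\ast})^{\widehat\sigma}(-t)=q^{\rho}(t)$, $(q^{\ast})^{\widehat\Delta}(-t)=-q^{\nabla}(t)$ and $L^{\ast}(-t,q^{\rho},-q^{\nabla})=L(t,q^{\rho},q^{\nabla})$, the substitution $s=-t$ turns the displayed delta invariant --- up to the harmless global factor $-1$ --- into exactly $\partial_{3}L\cdot\xi+\bigl[L-\partial_{3}L\cdot q^{\nabla}+\partial_{1}L\cdot\nu\bigr]\tau$, which is therefore constant along the nabla extremals of \eqref{problem nabla}. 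Observe how the delta term $-\partial_{1}L^{\ast}\widehat\mu$ reappears as $+\partial_{1}L\cdot\nu$: the reversal of sign on the graininess term is precisely the fingerprint of the backward calculus, and it is what distinguishes the nabla conserved quantity from its delta sibling.

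The main obstacle I expect is the second step: the bookkeeping of the duality itself. One must propagate the sign flip $f^{\nabla}=-(f^{\ast})^{\widehat\Delta}$ through every argument of $L^{\ast}$ and, above all, check carefully that the single scalar identity of Definition~\ref{def. invariance nabla} dualizes term-by-term to the delta invariance condition --- equivalently, that the expanded form in Remark~\ref{thm:invariance nabla} matches the delta analogue under $t\mapsto -s$, $\tau\mapsto-\widehat\tau$ and $q^{\nabla}\mapsto -(q^{\ast})^{\widehat\Delta}$. Once that equivalence is secured, the rest is the routine substitution sketched above.
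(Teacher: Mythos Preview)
Your proposal is correct and follows essentially the same route as the paper: dualize the nabla problem to a delta problem on $\T^{\ast}$ via Caputo's duality, check that the symmetry dualizes to $(\widehat\tau,\widehat\xi)=(-\tau^{\ast},\xi^{\ast})$, apply the known delta Noether theorem, and translate the conserved quantity back using the identities $\partial_1 L^{\ast}=-\partial_1 L$, $\partial_3 L^{\ast}=-\partial_3 L$, $\widehat\mu=\nu^{\ast}$, $(q^{\ast})^{\widehat\Delta}=-q^{\nabla}$. The only slip is the reference: the delta Noether theorem on time scales that you need is \cite[Theorem~4]{NT:ts}, not \cite{Torres04}, which treats the classical continuous setting and does not contain the graininess term $\partial_1 L\cdot\mu$ appearing in your displayed delta invariant.
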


\begin{proof} Let $q_0$ be an extremal of problem \eqref{problem nabla}. Then $q_0^\ast$ is an extremal of problem
\begin{gather*}
\mathcal{I}^\ast[g]=\int_{-b}^{-a} L^\ast(t,g^{\widehat{\sigma}}(t),g^{\widehat{\Delta}}(t)) \widehat{\Delta} t
\longrightarrow \min_{g\in C^1_{rd}}\\
g(-b)=B \, , \quad g(-a)=A \, ,
\end{gather*}
\textrm{i.e.},
\begin{equation*}
\partial_3 L^\ast\left(t,(q_0^\ast)^{\widehat{\sigma}}\left(t\right),(q_0^\ast)^{\widehat{\Delta}}(t)\right)
-\int_{-b}^{t} \partial_2 L^\ast\left(\tau,(q_0^\ast)^{\widehat{\sigma}}\left(\tau\right),(q_0^\ast)^{\widehat{\Delta}}(\tau)\right)
\widehat{\Delta} \tau = \text{const} \quad \forall t \in [-b,-a]^\kappa \, .
\end{equation*}
Now we note that if $\mathcal{I}$ is invariant on $V$ under the family of
transformations \eqref{eq:tinf nabla}, then $\mathcal{I}^\ast$ is invariant on
$\overline{U}=\{g \  | \  g:[-b,-a] \rightarrow \mathbb{R}^n,
g \in C^1_{rd}\}$ under the family of transformations
$$
\begin{cases}
\bar{t} =  t - \epsilon\tau^\ast(t,g) + o(\epsilon) \, ,\\
\bar{g} =  g + \epsilon\xi^\ast(t,g) + o(\epsilon) \, ,\\
\end{cases}
$$
where $\tau^\ast(t,u)=\tau(-t,u)$ and $\xi^\ast(t,u)=\xi(-t,u)$.
Hence, by \cite[Theorem~4]{NT:ts} on delta problems we can conclude that
\begin{multline*}
\partial_{3}L^\ast(t,(q_0^\ast)^{\widehat{\sigma}}(t),(q_0^\ast)^{\widehat{\Delta}}(t))\cdot\xi^\ast(t,q_0^\ast(t))
+ \Bigl[L^\ast(t,(q_0^\ast)^{\widehat{\sigma}}(t),(q_0^\ast)^{\widehat{\Delta}}(t))\\
-\partial_{3}L^\ast(t,(q_0^\ast)^{\widehat{\sigma}}(t),(q_0^\ast)^{\widehat{\Delta}}(t))\cdot (q_0^\ast)^{\widehat{\Delta}}(t)
-\partial_{1}L^\ast(t,(q_0^\ast)^{\widehat{\sigma}}(t),(q_0^\ast)^{\widehat{\Delta}}(t))\cdot\widehat{\mu}(t)\Bigr]\cdot
\left(-\tau^\ast(t,q_0^\ast(t))\right)
\end{multline*}
is a constant. Having in mind the equalities
\begin{gather*}
(q_0^\ast)^{\widehat{\Delta}}(t) = -q_0^{\nabla}(-t)\, , \quad
\left(q_0^\ast\right)^{\widehat{\sigma}}(t) = q_{0}^{\rho}(-t) \, , \quad
\widehat{\mu}(t)=\nu(-t)\, , \\
\partial_{1}L^\ast(t,(q_0^\ast)^{\widehat{\sigma}}(t),(q_0^\ast)^{\widehat{\Delta}}(t)) = - \partial_{1}L(-t,q_0^{\rho}(-t),q_0^{\nabla}(-t))\, ,\\
\partial_{3}L^\ast(t,(q_0^\ast)^{\widehat{\sigma}}(t),(q_0^\ast)^{\widehat{\Delta}}(t)) = - \partial_{3}L(-t,q_0^{\rho}(-t),q_0^{\nabla}(-t))\, ,\\
L^\ast(t,(q_0^\ast)^{\widehat{\sigma}}(t),(q_0^\ast)^{\widehat{\Delta}}(t))= L(-t,q_0^{\rho}(-t),q_0^{\nabla}(-t))\, ,\\
\tau^\ast(t, q_0^\ast(t))=\tau(-t,q_0(-t))\, ,\\
\xi^\ast(t, q_0^\ast(t))=\xi(-t,q_0(-t))\, ,
\end{gather*}
we obtain that
\begin{multline*}
-\partial_{3}L(-t,q_0^{\rho}(-t),q_0^{\nabla}(-t))\cdot\xi(-t,q_0(-t))
+ \Bigl[L(-t,q_0^{\rho}(-t),q_0^{\nabla}(-t))
-\partial_{3}L(-t,q_0^{\rho}(-t),q_0^{\nabla}(-t))\cdot q_0^{\nabla}(-t)\\
+ \partial_{1}L(-t,q_0^{\rho}(-t),q_0^{\nabla}(-t))\cdot\nu(-t)\Bigr]\cdot\left(-\tau(-t,q_0(-t))\right)
\end{multline*}
is constant. Let $s \in [a,b]_\kappa$ and set $s=-t$. Then,
\begin{multline*}
-\partial_{3}L(s,q_0^{\rho}(s),q_0^{\nabla}(s))\cdot\xi(s,q_0(s))
+ \Bigl[L(s,q_0^{\rho}(s),q_0^{\nabla}(s))
-\partial_{3}L(s,q_0^{\rho}(s),q_0^{\nabla}(s))\cdot q_0^{\nabla}(s)\\
+ \partial_{1}L(s,q_0^{\rho}(s),q_0^{\nabla}(s))\cdot\nu(s)\Bigr]\cdot\left(-\tau(s,q_0(s))\right)
\end{multline*}
is constant, which proves the desired result.
\end{proof}

Noether's theorem explains all conservation
laws of mechanics. However, the most important conservation
law --- \emph{conservation of energy},
which is obtained in mechanics from Noether's theorem
and invariance with respect to time translations  --
is typically obtained in the calculus of variations
as a corollary of the DuBois-Reymond condition \cite{Torres04}.
We now obtain a nabla version of DuBois-Reymond condition on time scales.

\begin{df}
We say that $q_0\in \mathcal{Q}$
is a local minimizer for problem \eqref{problem nabla}
if there exists $\delta > 0$ such that
$$
\mathcal{I}[q_0]\leq \mathcal{I}[q]
$$
for all $q \in \mathcal{Q}$
satisfying
$$
\parallel q - q_0\parallel :=
\sup_{t \in [a,b]_\kappa}\mid q^{\rho}(t)-q_0^{\rho}(t)\mid
+ \sup_{t \in [a,b]_\kappa}\mid q^{\nabla}(t)-q_0^{\nabla}(t)\mid < \delta \, ,
$$
where $|\cdot|$ denotes a norm in $\mathbb{R}^n$.
\end{df}

\begin{theorem}[DuBois-Reymond condition for nabla variational problems]
\label{secondEL nabla}
If $q\in \mathcal{Q}$ is a local minimizer of problem \eqref{problem nabla},
then $q$ satisfies the equation
\begin{equation*}
\frac{\nabla}{\nabla t} \overline{\mathcal{H}}(t,q^\rho(t),q^\nabla(t))
=-\partial_1 L(t,q^\rho(t),q^\nabla(t))
\end{equation*}
for all $t\in [a,b]_\kappa$, where
$$
\overline{\mathcal{H}}(t,u,v)=-L(t,u,v) + \partial_3 L(t,u,v) \cdot v
- \partial_1 L(t,u,v) \nu(t) \, ,
$$
$t\in\T$, and $u,v\in \mathbb{R}^n$.
\end{theorem}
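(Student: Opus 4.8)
The plan is to exploit Caputo's duality exactly as in the proof of Theorem~\ref{Noether nabla}: transport the nabla minimization problem to its dual delta problem, invoke the already-established \emph{delta} DuBois-Reymond equation (Theorem~\ref{secondEL}), and then pull the resulting identity back to $\T$ using the dictionary of Proposition~\ref{integral}.

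First I would show that if $q$ is a local minimizer of \eqref{problem nabla}, then its dual $q^{\ast}$ is a local minimizer of the delta problem
\begin{gather*}
\mathcal{I}^{\ast}[g]=\int_{-b}^{-a} L^{\ast}(t,g^{\widehat{\sigma}}(t),g^{\widehat{\Delta}}(t))\,\widehat{\Delta} t \longrightarrow \min_{g\in C^1_{rd}},\\
g(-b)=B\,,\quad g(-a)=A\,.
\end{gather*}
The equality $\mathcal{I}[q]=\mathcal{I}^{\ast}[q^{\ast}]$ is Lemma~\ref{corlag}, and $q\mapsto q^{\ast}$ is a bijection from $\mathcal{Q}$ onto the admissible class of the dual problem that respects the boundary data (Proposition~\ref{integral}(2),(7)). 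It remains only to check that it preserves the neighborhood structure: since $(q^{\ast})^{\widehat{\Delta}}(s)=-q^{\nabla}(-s)$ and $(q^{\ast})^{\widehat{\sigma}}(s)=q^{\rho}(-s)$, passing to absolute values and suprema gives $\parallel q-q_0\parallel = \parallel q^{\ast}-q_0^{\ast}\parallel$ in the respective norms, so local minimality transports. Unlike the proof of Theorem~\ref{Noether nabla}, which only dealt with extremals (an algebraic identity under duality), here this minimizer-transport step must be made explicit.

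Next I would apply Theorem~\ref{secondEL} to $q^{\ast}$, with the pair $(L^{\ast},\widehat{\mu})$ playing the role of $(L,\mu)$. This yields, for all $t\in[-b,-a]^{\kappa}$,
\begin{equation*}
\frac{\widehat{\Delta}}{\widehat{\Delta} t}\,\mathcal{H}\bigl(t,(q^{\ast})^{\widehat{\sigma}}(t),(q^{\ast})^{\widehat{\Delta}}(t)\bigr)
=-\partial_1 L^{\ast}\bigl(t,(q^{\ast})^{\widehat{\sigma}}(t),(q^{\ast})^{\widehat{\Delta}}(t)\bigr),
\end{equation*}
where $\mathcal{H}(t,u,v)=-L^{\ast}(t,u,v)+\partial_3 L^{\ast}(t,u,v)\,v+\partial_1 L^{\ast}(t,u,v)\,\widehat{\mu}(t)$. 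I would then translate both sides back to $\T$. Writing $s=-t$ and using the duality relations collected in the proof of Theorem~\ref{Noether nabla} --- namely $(q^{\ast})^{\widehat{\sigma}}(t)=q^{\rho}(s)$, $(q^{\ast})^{\widehat{\Delta}}(t)=-q^{\nabla}(s)$, $\widehat{\mu}(t)=\nu(s)$, together with $\partial_1 L^{\ast}=-\partial_1 L$, $\partial_3 L^{\ast}=-\partial_3 L$, and $L^{\ast}=L$ at the dual arguments --- a direct substitution collapses the sign flips in $\partial_3 L^{\ast}\cdot(q^{\ast})^{\widehat{\Delta}}$ and in $\partial_1 L^{\ast}\cdot\widehat{\mu}$ and gives
\begin{equation*}
\mathcal{H}\bigl(t,(q^{\ast})^{\widehat{\sigma}}(t),(q^{\ast})^{\widehat{\Delta}}(t)\bigr)
=\overline{\mathcal{H}}\bigl(s,q^{\rho}(s),q^{\nabla}(s)\bigr),
\qquad
-\partial_1 L^{\ast}=\partial_1 L(s,q^{\rho}(s),q^{\nabla}(s)).
\end{equation*}
To convert the dual delta derivative into a nabla derivative I would set $G(s):=\overline{\mathcal{H}}(s,q^{\rho}(s),q^{\nabla}(s))$, so that $G^{\ast}(t)=\mathcal{H}(t,(q^{\ast})^{\widehat{\sigma}}(t),(q^{\ast})^{\widehat{\Delta}}(t))$, and apply Proposition~\ref{integral}(6), which gives $G^{\nabla}(s)=-(G^{\ast})^{\widehat{\Delta}}(-s)$. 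The delta equation then reads $-G^{\nabla}(s)=\partial_1 L(s,q^{\rho}(s),q^{\nabla}(s))$, i.e. the claimed nabla DuBois-Reymond equation, valid for $s\in[a,b]_{\kappa}$ since $([a,b]_{\kappa})^{\ast}=[-b,-a]^{\kappa}$ by Proposition~\ref{integral}(1).

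All computations are routine once the dictionary is in place; the only point requiring genuine care is the sign bookkeeping that simultaneously (i) turns the $+\partial_3 L^{\ast}v$ and $+\partial_1 L^{\ast}\widehat{\mu}$ terms of $\mathcal{H}$ into the $+\partial_3 L\cdot v$ and $-\partial_1 L\,\nu$ terms of $\overline{\mathcal{H}}$, and (ii) uses the derivative-duality formula $G^{\nabla}(s)=-(G^{\ast})^{\widehat{\Delta}}(-s)$ to produce the overall minus sign on the right-hand side. Getting either sign wrong would spoil the identity, so this is where I would concentrate the verification.
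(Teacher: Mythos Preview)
Your proposal is correct and follows essentially the same duality route as the paper: transport the nabla minimizer to the dual delta problem, apply Theorem~\ref{secondEL} there, identify $\mathcal{H}$ evaluated along $(q^{\ast})^{\widehat{\sigma}},(q^{\ast})^{\widehat{\Delta}}$ with $\overline{\mathcal{H}}$ evaluated along $q^{\rho},q^{\nabla}$ at the dual point, and use $G^{\nabla}(s)=-(G^{\ast})^{\widehat{\Delta}}(-s)$ together with $\partial_1 L^{\ast}=-\partial_1 L$ to finish. If anything you are slightly more careful than the paper, which asserts the minimizer transport without spelling out the norm preservation $\parallel q-q_0\parallel=\parallel q^{\ast}-q_0^{\ast}\parallel$ that you make explicit.
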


\begin{proof}
Let $q_0$ be local minimizer of problem \eqref{problem nabla}.
Then $q_0^\ast$ is a local minimizer of problem
\begin{equation*}
\mathcal{I}^\ast[g]=\int_{-b}^{-a} L^\ast(t,g^{\widehat{\sigma}}(t),g^{\widehat{\Delta}}(t)) \widehat{\Delta} t
\longrightarrow \min_{g\in C^1_{rd}}
\end{equation*}
subject to $g(-b)=B$ and $g(-a)=A$, $t \in [-b,-a]^\kappa$.
By the second Euler-Lagrange equation for delta problems \eqref{2equationEL} we conclude that
\begin{equation}
\label{eq-hamiltonian-dual}
\frac{\widehat{\Delta}}{\widehat{\Delta} t} \mathcal{H}(t,(q_0^\ast)^{\widehat{\sigma}}(t),(q_0^\ast)^{\widehat{\Delta}}(t))
=-\partial_1 L^\ast(t,(q_0^\ast)^{\widehat{\sigma}}(t),(q_0^\ast)^{\widehat{\Delta}}(t))
\end{equation}
for all $t\in [-b,-a]^\kappa$, where
$$
\mathcal{H}(t,u,v)=-L^\ast(t,u,v) + \partial_3 L^\ast(t,u,v) \cdot v
+ \partial_1 L^\ast(t,u,v) \widehat{\mu}(t) \, .$$
Note that
$$\mathcal{H}(t,(q_0^\ast)^{\widehat{\sigma}}(t),(q_0^{\ast})^{\widehat{\Delta}}(t))
= {\overline{\mathcal{H}}}^{\ast} (t,(q_0^\ast)^{\widehat{\sigma}}(t),(q_0^\ast)^{\widehat{\Delta}}(t))
$$
with $\mathcal{H}^\ast(t,u,v)=\mathcal{H}(-t,u,-v)$.
Since
$$
(\overline{\mathcal{H}}^{\ast})^{\widehat{\Delta}} (t,(q_0^\ast)^{\widehat{\sigma}}(t),(q_0^\ast)^{\widehat{\Delta}}(t))
= - \overline{\mathcal{H}}^{\nabla}(-t,q_0^{\rho}(-t),q_0^{\nabla}(-t))
$$
and
$$
\partial_{1}L^\ast(t,(q_0^\ast)^{\widehat{\sigma}}(t),(q_0^\ast)^{\widehat{\Delta}}(t))
= - \partial_{1}L(-t,q_0^{\rho}(-t),q_0^{\nabla}(-t))\, ,
$$
equation \eqref{eq-hamiltonian-dual} shows that
$$
\overline{\mathcal{H}}^{\nabla}(-t,q_0^{\rho}(-t),q_0^{\nabla}(-t))
=- \partial_{1}L(-t,q_0^{\rho}(-t),q_0^{\nabla}(-t)) \, .
$$
Making $-t=s \in [a,b]_\kappa$ it follows that
$$
\overline{\mathcal{H}}^{\nabla}(s,q_0^{\rho}(s),q_0^{\nabla}(s))=- \partial_{1}L(s,q_0^{\rho}(s),q_0^{\nabla}(s))\, ,
$$
which proves the intended result.
\end{proof}


\section{An Example}
\label{sec:ex}

Let $\mathbb{T} = \left\{0,\frac{1}{8},\frac{1}{4},\frac{3}{8},
\frac{1}{2},\frac{5}{8},\frac{3}{4},\frac{7}{8},1\right\}$
and consider the following problem on $\mathbb{T}$:
\begin{equation}
\label{eq:prb:ex}
\begin{gathered}
\mathcal{I}[q] = \int_0^1 \left[(q^\nabla(t))^2 - 1\right]^2 \nabla t \longrightarrow \min \, ,\\
q(0) = 0 \, , \quad q(1) = 0 \, , \\
q \in C_{ld}^1(\mathbb{T}; \mathbb{R}) \, .
\end{gathered}
\end{equation}
The Euler-Lagrange equation \eqref{eq:nabla:EL:eq} takes the form
\begin{equation}
\label{eq:EL:nwar}
q^\nabla(t) \left[(q^\nabla(t))^2 - 1\right] = \text{const} \, ,
\quad t\in \mathbb{T}_\kappa\, ,
\end{equation}
while our DuBois-Reymond condition for nabla variational problems
(\textrm{cf.} Theorem~\ref{secondEL nabla}) asserts that
\begin{equation}
\label{eq:2ndEL:nwar}
\left[(q^\nabla(t))^2 - 1\right] \left[1 + 3 (q^\nabla(t))^2\right] = \text{const}
\, , \quad t\in \mathbb{T}_\kappa\, .
\end{equation}
The same conservation law \eqref{eq:2ndEL:nwar} is also obtained
from our Noether's theorem for nabla variational problems
(\textrm{cf.} Theorem~\ref{Noether nabla})
since problem \eqref{eq:prb:ex} is invariant
under the family of transformations
$\bar{t} =  t + \epsilon$ and
$\bar{q} = q$, for which $\tau(t,q) \equiv 1$ and $\xi(t,q) \equiv 0$.
Let $\tilde{q}(t) = 0$ for all $t \in \mathbb{T} \setminus \left\{\frac{1}{8}, \frac{7}{8}\right\}$,
and $\tilde{q}\left(\frac{1}{8}\right) = \tilde{q}\left(\frac{7}{8}\right) = \frac{1}{8}$.
One has $\tilde{q}^\nabla\left(\frac{1}{8}\right) = \tilde{q}^\nabla\left(\frac{7}{8}\right) = 1$,
$\tilde{q}^\nabla\left(\frac{1}{4}\right) = \tilde{q}^\nabla(1) = -1$,
and $\tilde{q}^\nabla\left(\frac{i}{8}\right) = 0$, $i = 3,4,5,6$.
We see that $\tilde{q}$ is an extremal, \textrm{i.e.}, it satisfies the
Euler-Lagrange equation \eqref{eq:EL:nwar}. However $\tilde{q}$ cannot
be a solution to the problem \eqref{eq:prb:ex} since it does not satisfy
the DuBois-Reymond condition \eqref{eq:2ndEL:nwar}.
In fact, any function $q$ satisfying $q^\nabla(t) \in \{-1,0,1\}$, $t \in \mathbb{T}_\kappa$,
is an Euler-Lagrange extremal. Among them, only $q^\nabla (t) = 0$ for all $t \in \mathbb{T}_\kappa$
and those with $q^\nabla(t) = \pm 1$ satisfy our condition \eqref{eq:2ndEL:nwar}.
This example shows a problem for which the Euler-Lagrange equation gives
several candidates which are not the solution to the problem, while
the results of the paper give a smaller set of candidates.
Moreover, the candidates obtained from our conservation law lead us directly
to the explicit solution of the problem. Indeed, the null function and any function $q$ with $q(0) = q(1) = 0$ and
$q^\nabla(t) = \pm 1$, $t \in \mathbb{T}_\kappa$, gives $\mathcal{I}[q] = 0$.
They are minimizers because $\mathcal{I}[q] \ge 0$ for any function $q \in C_{ld}^1$.


\section{Final Comments}
\label{sec:fc}

The question of originality of nabla results on time scales after previous delta counterparts
have been proved is an important issue. During recent years several mathematicians
have tried to obtain a satisfactory answer to the problem.
To the best of the authors knowledge there are now six techniques to obtain directly results
for the nabla or delta calculus.
These six approaches were introduced, respectively, in the following references
(ordered by date, from the oldest approach to the most recent one):
\cite{alpha:appr} (the alpha approach); \cite{diam:alpha:appr}
(the diamond-alpha approach); \cite{Aldwoah:appr} (Aldwoah's or
generalized time scales approach); \cite{1:Fasciculi_Mathematici}
(the delta-nabla approach); \cite{Caputo} (Caputo's or duality approach);
\cite{3:2010_CCDC} (the directional approach).
Paper \cite{alpha:appr} introduces the so-called
alpha derivatives, where the $\sigma$ operator in the definition
of delta derivative (or $\rho$ in the definition
of nabla derivative) is substituted by a more general
function $\alpha(\cdot)$; paper \cite{diam:alpha:appr} proposes
a convex combination between delta and nabla derivatives:
$f^{\diamond_{\alpha}}(t)= \alpha f^{\Delta}(t)+(1-\alpha)f^{\nabla}(t)$,
$\alpha \in [0,1]$. Both alpha and diamond-alpha approaches
have been further investigated in the literature,
but they are not effective in the calculus of variations
due to absence of an anti-derivative
(see, \textrm{e.g.}, \cite{diamondBasia,jia}).
Aldwoah's PhD thesis \cite{Aldwoah:appr}
proposes an interesting generalization of the definition of time scale
and develops on it a generalized calculus that gives, simultaneously,
the delta and nabla calculi as particular cases. It provides a very elegant
and general calculus, but it is more complex than all the other approaches.
In some sense Caputo's approach \cite{Caputo} is just a particular case
of Aldwoah's one. More than that, Aldwoah gives all the necessary
formalism and all the proofs, while the duality of Caputo is based on
a principle (the duality principle): ``For any statement true in the nabla
(resp. delta) calculus in the time scale $\mathbb{T}$ there is an equivalent dual statement
in the delta (resp. nabla) calculus for the dual time scale $\mathbb{T}^\ast$.''
Such principle is illustrated in \cite{Caputo} by means of some examples,
but is never proved (it is a principle, not a theorem).
In \cite{3:2010_CCDC} it is studied the
problem of minimizing or maximizing the composition of delta
and nabla integrals with Lagrangians that involve directional derivatives.
In our paper we promote Caputo's technique,
showing how her approach is simple and effective.
To the best of our knowledge, while all the other approaches
(alpha, diamond-alpha, Aldwoah's, delta-nabla, and directional approaches)
have already been further explored in the literature,
Caputo's idea is, to the present moment,
voted to ostracism. Our work contributes to change the state of affairs,
illustrating the duality approach
in obtaining a nabla Noether-type symmetry theorem and
a nabla DuBois-Reymond necessary optimality condition.
The duality approach \cite{Caputo} we are promoting in our work
is not the only approach in the literature; it is not the oldest
or the most recent one; it is also not the most general;
and probably others will appear.
However, the duality approach is simple and beautiful,
making possible short and elegant proofs.

\bigskip

We are grateful to two anonymous referees for
several comments and encouragement words.


\section*{Acknowledgements}

The authors were supported by the Centre for Research on Optimization and Control (CEOC)
from the Portuguese Foundation for Science and Technology (FCT),
cofinanced by the European Community fund FEDER/POCI 2010.



\end{document}